\newtheorem{theorem}{Theorem}[section]
\newtheorem{remark}[theorem]{Remark}
\newtheorem{assumption}[theorem]{Assumption}
\newenvironment{proof}{\smallskip\par{\sc Proof.}\enspace}%
 {{\unskip\nobreak\hfil\penalty50\hskip2em
          \hbox{}\nobreak\hfil{\rule[-1pt]{5pt}{10pt}}
          \parfillskip=0pt\finalhyphendemerits=0
          \par\medskip}} 
\def\section{\@startsection {section}{1}{\z@}{3.25ex plus 1ex minus
 .2ex}{1.5ex plus .2ex}{\large\bf}}
\def\subsection{\@startsection{subsection}{2}{\z@}{3.25ex plus 1ex minus
 .2ex}{1.5ex plus .2ex}{\normalsize\bf}}
\chardef\bslash=`\\ 
\def\N{\mathbb{N}}
\def\P{\mathbb{P}}
\def\E{\mathbb{E}}
\newcommand{\eval}[2][\right]{\relax
\ifx#1\right\relax \left.\fi#2#1\rvert}
\numberwithin{equation}{section}
\def\caution#1{\ifnum \madechanges=1 \affixmessage{#1}%
\else \relax \fi}
\def\affixmessage#1{\marginpar{{\footnotesize  \em #1} \openup
    -.3\baselineskip }}
\title{ On a class of stochastic differential equations with random and H\"older continuous coefficients arising in biological modeling }
\author{Enrico Bernardi\thanks{Dipartimento di Scienze Statistiche Paolo Fortunati, Università di Bologna,
Bologna, Italy. \textbf{e-mail}: enrico.bernardi@unibo.it} \and Vinayak Chuni\thanks{Dipartimento di Scienze Statistiche Paolo Fortunati, Università di Bologna,
Bologna, Italy. \textbf{e-mail}: vinayak.chuni2@unibo.it} \and  Alberto Lanconelli\thanks{Dipartimento di Scienze Statistiche Paolo Fortunati, Università di Bologna,
Bologna, Italy. \textbf{e-mail}: alberto.lanconelli2@unibo.it}}
\date{\today}
\begin{document}

\maketitle

\numberwithin{equation}{section}

\bigskip

\begin{abstract}
Inspired by the paper Greenhalgh et al. \cite{Mao paper} we investigate a class of two dimensional stochastic differential equations related to susceptible-infected-susceptible epidemic models with demographic stochasticity. While preserving the key features of the model considered in \cite{Mao paper}, where an \emph{ad hoc} approach has been utilized to prove existence, uniqueness and non explosivity of the solution, we consider an encompassing family of models described by a stochastic differential equation with random and H\"older continuous coefficients. We prove the existence of a unique strong  solution by means of a Cauchy-Euler-Peano approximation scheme which is shown to converge in the proper topologies to the unique solution.
 \end{abstract}

Key words and phrases: two dimensional susceptible-infected-susceptible epidemic model, Brownian motion, stochastic differential equation\\

AMS 2000 classification: 60H10, 60H30, 92D30

\allowdisplaybreaks

\section{Introduction}

Susceptible-infected-susceptible (SIS) epidemic model is one of the most popular models for how diseases spread in a population. In such a model an individual starts off being susceptible to a disease and at some point of time gets infected and then recovers after some time  becoming susceptible again. The literature of such mathematical models is very rich: for probabilistic/stochastic models one may look for instance at Allen \cite{Allen paper}, Allen and Burgin \cite{AB}, A. Gray et al. \cite{GGHMP}, Hethcote and van den Driessche \cite{HD}, Kryscio and  Lefévre \cite{KL}, McCormack and Allen \cite{MA} and Nasell \cite{N}. We also refer the reader to the detailed account presented in Greenhalgh et al. \cite{Mao paper} for an overview on both deterministic and stochastic models.\\ 
The focus of the present paper is on the model presented in \cite{Mao paper}. One of its distinguishing features is the nature of the births and deaths that are regarded as stochastic processes with per capita disease contact rate depending on the population size. Contrary to many other previously proposed models, this stochasticity produces a variable population size which turns out to be a reasonable assumption for slowly spreading diseases. \\
From a mathematical point of view, the SIS model proposed in \cite{Mao paper} amounts at the following two dimensional stochastic differential equation for the vector $(S_t, I_t)$ where $S_t$ and $I_t$ stand for the number of susceptible and infected individuals at time $t$, respectively:
\begin{eqnarray}
\left\{ \begin{array}{ll}
dS&=\left[  -\frac{\lambda(N)SI}{N}+(\mu+\gamma)I \right]dt+\sqrt{\frac{\lambda(N)SI}{N}+(\mu+\gamma)I+2\mu S}dW_3 \label{Mao 2 intro} \\
dI&=\left[  \frac{\lambda(N)SI}{N}-(\mu+\gamma)I \right]dt+\sqrt{\frac{\lambda(N)SI}{N}+(\mu+\gamma)I}dW_4.
\end{array}\right.
\end{eqnarray}
Here, $N:=S+I$ denotes the total population size while $\mu$, $\gamma$ and $\lambda:[0,+\infty[\to [0,+\infty[$ are suitably chosen parameters. The system (\ref{Mao 2 intro}) is driven by the two dimensional correlated Brownian motion $(W_3,W_4)$ resulting from a certain application of the martingale representation theorem (see Section \ref{section Mao} below for technical details). The system (\ref{Mao 2 intro}) is then shown to be equivalent to the triangular system
\begin{eqnarray}
\left\{ \begin{array}{ll}\label{Mao triangular SDE intro}
dI&=\left[  \frac{\lambda(N)}{N}(N-I)I-(\mu+\gamma)I \right]dt+\sqrt{\frac{\lambda(N)}{N}(N-I)I+(\mu+\gamma)I}dW_4\\
dN&=\sqrt{2\mu N}dW_5
\end{array}\right.
\end{eqnarray}
where now the second equation, the so-called square root process (see  for instance the book by Mao \cite{Mao book} for the properties of this process), is independent of the first one. To prove the existence of a solution to the first equation in (\ref{Mao triangular SDE intro}) the authors resort to Theorem 2.2 in Chapter IV of Ikeda and Watanabe \cite{Ikeda Watanabe} while for the uniqueness they need to construct a localized version of Theorem 3.2, Chapter IV in \cite{Ikeda Watanabe}. The equation for $I$ in (\ref{Mao triangular SDE intro}) exhibits random (for the dependence on the process $N$) and H\"older continuous (for the presence of the square root in the diffusion term) coefficients resulting in a stochastic differential equation for which the issue of the existence of a unique solution has not been addressed in the literature yet.\\
Our aim in the present paper is to propose a more general approach allowing for the investigation of a richer family of models characterized by the same distinguishing features of the model analyzed in \cite{Mao paper}.\\
The paper is articulated as follows: In Section 2 we present a general review using the exposition in the book by Allen (see \cite{Allen book}) of a two-state dynamics leading to a Fokker-Planck partial differential equation and its associated stochastic system. This is followed by Section 2.1 where we consider the more specific situation of a bio-demographic model like the one presented in \cite{Mao paper}. Our idea is to embed the rather special system of SDE's of the model in a slightly more encompassing class, like the one in (\ref{SDE}) below, in order to establish a general proof of strong existence and uniqueness. Our technique relies on the construction of an explicit approximating sequence of stochastic processes (inspired by the work of Zubchenko \cite{Zubchenko}) in such a way that all the relevant features of the solution appear to be directly constructed from scratch. In Section 3 we give a detailed proof of existence and uniqueness of the SDE (\ref{SDE}). We would like to point out that systems of SDE's with non-Lipschitz or H\"older coefficients exhibit non-standard difficulties as far as general results for existence and uniqueness are concerned. This model conforms to the aforementioned difficulties and that is what has motivated us in approaching  the problem. Our idea has been to how we could encase the model proposed in \cite{Mao paper} within a more general framework , thus bypassing some of the computations done there, and hopefully allowing for larger class of models to be treated. 

\section{A general two-state system}

In this section we review the construction of a general two-state system presented in the book by Allen (\cite{Allen book}). The model will then be made concrete through the assumptions contained in the paper by Greenhalgh et al. (\cite{Mao paper}) and this will lead to the class of stochastic differential equations investigated in the present manuscript. 

\begin{figure}[h]
\begin{center}
\begin{tikzpicture}[very thick]

\shade[draw=black!50,fill=black!20,thick] (0,0) rectangle +(3,3);
\shade[draw=black!50,fill=black!20,thick] (8,0) rectangle +(3,3);

\node (local1) at ( 1.5,1.5) {$S_{1}(t)$};
\node (local2) at (9.5,1.5) {$S_{2}(t)$};
\node (local3) at ( 3,1.5) {};
\node (local4) at ( 8,1.5) {};

\node (local5) at ( 3,1.1) {};
\node (local6) at ( 8,1.1) {};

\node (local7) at ( 0.1,0) {};
\node (local8) at ( 0.1,-2) {};

\node (local9) at ( 1.5,0) {};
\node (local10) at ( 1.5,-2) {};

\node (local11) at ( 2.7,0) {};
\node (local12) at ( 2.7,-1.8) {};

\node (local13) at ( 8.3,0) {};
\node (local14) at ( 8.3,-1.8) {};

\node (local15) at ( 9.5,0) {};
\node (local16) at ( 9.5,-1.8) {};

\node (local17) at ( 10.9,0) {};
\node (local18) at ( 10.9,-1.8) {};

\node (local19) at ( 4.5,-1.55) {};
\node (local20) at ( 4.5,-3) {};

\node (local21) at ( 6.5,-1.55) {};
\node (local22) at ( 6.5,-3) {};

\draw [->] (local3.east) -- node [above] {$5$}  (local4.west);
\draw [->] (local6.west) -- node [below] {$6$}  (local5.east);

\draw [->] (local7.south) -- node [left] {$1$}  (local8.north);
\draw [->] (local10.north) -- node [right] {$2$}  (local9.south);

\draw [-] (local12.north) -- node [] {}  (local11.south);


\draw [-] (local11.south) -- node [] {}  (local12.north) -- node []
{} (local14.north) -- node [] {}  (local13.south) ;

\draw [->] (local15.south) -- node [left] {$3$}  (local16.north);

\draw [->] (local18.north) -- node [right] {$4$}  (local17.south);

\draw [->] (local19.south) -- node [left] {$7$}  (local20.north);

\draw [<-] (local21.south) -- node [right] {$8$}  (local22.north);

\end{tikzpicture}

\end{center}
\caption{A two-state dynamical process}
\label{fig:state}
\end{figure}
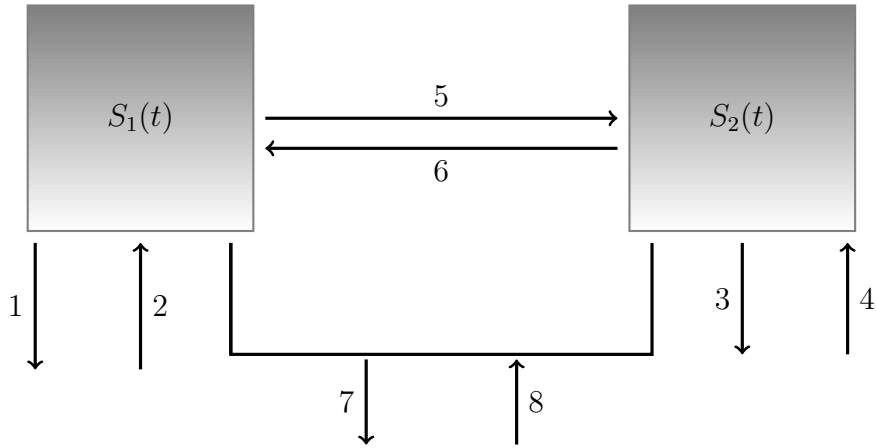

We begin by considering a representative two-state dynamical process which is illustrated in Figure \ref{fig:state}. Let $S_1(t)$ and $S_2(t)$ represent the values of the two states of the system at time $t$. It is assumed that in a small time interval $\Delta t$, state $S_1$ can change by $- \lambda_1$, $0$ or $\lambda_1$ and state $S_2$ can change by $- \lambda_2$, $0$ or $\lambda_2$, where $\lambda_1,\lambda_2 \ge 0$. Let $\Delta S := [\Delta S_1,\Delta S_2]^T$ be the change in a small time interval $\Delta t$.  As illustrated in Figure \ref{fig:state} , there are eight possible changes for the two states in the time interval $\Delta t$ not including the case where there is no change in the time interval. The possible changes and the probabilities of these changes are given in Table \ref{tab:table1}. It is assumed that the probabilities are given to $O((\Delta t)^2)$. For example, change $1$ represents a loss of $\lambda_1$ in $S_1$ with probability $d_1 \Delta t$, change $5$ represents a transfer of $\lambda_1$ out of state $S_1$ with a corresponding transfer of $\lambda_2$ into state $S_2$ with probability $m_{12} \Delta t$ and change $7$ represents a simultaneous reduction in both states $S_1$ and $S_2$. As indicated in the table, all probabilities may depend on $S_1(t)$, $S_2(t)$ and the time $t$. Also notice that it is assumed that the probabilities for the changes are proportional to the time interval $\Delta t$.
\begin{table}[h!]
  \centering
     \caption{Possible changes in the representative two-state system
with the corresponding probabilities}
     \label{tab:table1}
     \begin{tabular}{lcl}
       \toprule Change & & Probability\\
       \midrule $
\Delta \mathbf{S}^{(1)} = [-1,0]^{T} $&  & $ p_{1} =
d_{1}(t,S_{1},S_{2})\Delta t $\\ $ \Delta \mathbf{S}^{(2)} = [1,0]^{T}
       $ & & $ p_{2} = b_{1}(t,S_{1},S_{2})\Delta t $\\
       $ \Delta
\mathbf{S}^{(3)} = [0,-1]^{T} $ &  & $ p_{3} =
                                     d_{2}(t,S_{1},S_{2})\Delta t $\\
       $ \Delta \mathbf{S}^{(4)} = [0,1]^{T}
       $&  & $ p_{4} = b_{2}(t,S_{1},S_{2})\Delta t $\\
       $ \Delta
\mathbf{S}^{(5)} = [-1,1]^{T} $ & & $ p_{5} =
                                    m_{12}(t,S_{1},S_{2})\Delta t $\\
       $ \Delta \mathbf{S}^{(6)} =
       [1,-1]^{T} $&  & $ p_{6} = m_{21}(t,S_{1},S_{2})\Delta t $\\
       $ \Delta
\mathbf{S}^{(7)} = [-1,-1]^{T} $ & & $ p_{7} =
                                    m_{11}(t,S_{1},S_{2})\Delta t $\\
       $ \Delta \mathbf{S}^{(8)} =
       [1,1]^{T} $ & & $ p_{8} = m_{22}(t,S_{1},S_{2})\Delta t $\\
       $ \Delta
\mathbf{S}^{(9)} = [0,0]^{T} $ & & $ p_{9} = 1 - \sum_{j=1}^{8}p_{j}
                                   $\\
       \bottomrule
     \end{tabular}
   \end{table}

\noindent It is useful to calculate the mean vector and covariance matrix for the change $\Delta S = [\Delta S_1, \Delta S_2]^T$ fixing the value of $S$ at time $t$. Using the table below,
 \begin{eqnarray*}
E[\Delta S] = \sum_{j=1}^9 p_j \Delta S^{(j)}= \begin{bmatrix} (-d_{1}
+b_{1} -m_{12} +m_{21} +m_{22} -m_{11}) \lambda_1  \\ (-d_{2} +b_{2} +m_{12}
-m_{21} +m_{22} -m_{11}) \lambda_2
   \end{bmatrix} \Delta t
   \end{eqnarray*}
 \begin{eqnarray*}
   E[\Delta S (\Delta S)^T] &=& \sum_{j=1}^9 p_j (\Delta S^{(j)}) (\Delta S^{(j)})^T \\ 
&=&\begin{bmatrix} (d_{1}
+b_{1} +m_a) \lambda_1^2 & (-m_{12}-m_{21}+m_{22}+m_{11})\lambda_1 \lambda_2   \\ (-m_{12}-m_{21}+m_{22}+m_{11})\lambda_1 \lambda_2 & (d_{2}
+b_{2} +m_a) \lambda_2^2
   \end{bmatrix} \Delta t
   \end{eqnarray*}
   where we set $m_a:=m_{12}+m_{21}+m_{11}+m_{22}$. Notice that the covariance matrix is set equal to $E(\Delta S(\Delta S)^T )/\Delta t$ because $E(\Delta S)(E(\Delta S))^T = O((\Delta t)^2)$.  We now define 
  \begin{eqnarray}\label{mean and variance}
   \mu(t,S_1,S_2)=E[\Delta S] /\Delta t \quad\text{ and  }\quad V(t,S_1,S_2)= E[\Delta S (\Delta S)^T]/ \Delta t
   \end{eqnarray}
and we denote by $B(t, S_1, S_2)$ the symmetric square root matrix of $V$. A forward Kolmogorov equation can be determined for the probability distribution at time $t +\Delta t$ in terms of the distribution at time $t$. If we write $p(t, x_1, x_2)$ for the probability that $S_1(t) = x_1$ and $S_2(t) = x_2$, then referring to Table \ref{tab:table1} we get
\begin{equation}\label{Equation5.1Allen}
p(t + \Delta t, x_1, x_2) = p(t, x_1, x_2) + \Delta t  \sum_{i=1}^{10} T_i 
\end{equation}
where 
\begin{eqnarray*}
T_1 &=&p(t,x_1,x_2)(-d_1(t,x_1,x_2)-b_1(t,x_1,x_2)-d_2(t,x_1,x_2)-b_2(t,x_1,x_2))\\
T_2 &=&p(t,x_1,x_2)(-m_a(t,x_1,x_2))\\
T_3 &=&p(t,x_1 +\lambda_1,x_2)d_1(t,x_1 +\lambda_1,x_2)\\
T_4 &=&p(t,x_1 -\lambda_1,x_2)b_1(t,x_1 -\lambda_1,x_2)\\
T_5 &=&p(t,x_1,x_2 -\lambda_2)b_2(t,x_1,x_2 -\lambda_2)\\
T_6 &=&p(t,x_1,x_2 +\lambda_2)d_2(t,x_1,x_2 +\lambda_2)\\
T_7 &=&p(t,x_1 +\lambda_1,x_2 -\lambda_2)m_{12}(t,x_1 +\lambda_1,x_2 -\lambda_2)\\
T_8 &=&p(t,x_1 -\lambda_1,x_2 +\lambda_2)m_{21}(t,x_1 -\lambda_1,x_2 +\lambda_2)\\
T_9 &=&p(t,x_1 +\lambda_1,x_2 +\lambda_2)m_{11}(t,x_1 +\lambda_1,x_2 +\lambda_2)\\
T_{10}&=&p(t,x_1 -\lambda_1,x_2 -\lambda_2)m_{22}(t,x_1 -\lambda_1,x_2 -\lambda_2).
\end{eqnarray*}
Now, expanding out the terms $T_3$  through $T_{10}$ in second order Taylor polynomials around the point $(t, x_1, x_2)$, it follows that
\begin{eqnarray*}
T_3 &\approx&  pd_1 +\partial_{x_1}( pd_1 )\lambda_1 + 1/2 \partial_{x_1x_2}^2(pd_1) \lambda_1^2\\
T_4 &\approx&  pb_1 -\frac { \partial( pb_1 )}{\partial x_1} \lambda_1 + \frac{1}{2} \frac{\partial^2(pb_1)}{\partial x_1^2}  \lambda_1^2\\
T_5 &\approx&  pb_2 -\frac { \partial( pb_2 )}{\partial x_2} \lambda_2 + \frac{1}{2} \frac{\partial^2(pb_2)}{\partial x_2^2}  \lambda_2^2\\
T_6 &\approx&  pd_2 -\frac { \partial( pd_2 )}{\partial x_2} \lambda_2 + \frac{1}{2} \frac{\partial^2(pd_2)}{\partial x_2^2}  \lambda_2^2\\
T_7 &\approx&  pm_{12} +\frac { \partial( pm_{12} )}{\partial x_1} \lambda_1 - \frac { \partial( pm_{12} )}{\partial x_2} \lambda_2+ \frac{1}{2}  \sum_{i=1}^2 \sum_{j=1}^2 (-1)^{i+j} \frac{\partial^2(pm_{12})}{\partial x_i \partial x_j} \lambda_i \lambda_j\\
T_8 &\approx&  pm_{21} -\frac { \partial( pm_{21} )}{\partial x_1} \lambda_1 + \frac { \partial( pm_{21} )}{\partial x_2} \lambda_2+ \frac{1}{2}  \sum_{i=1}^2 \sum_{j=1}^2 (-1)^{i+j} \frac{\partial^2(pm_{21})}{\partial x_i \partial x_j} \lambda_i \lambda_j\\
T_9 &\approx&  pm_{11} +\frac { \partial( pm_{11} )}{\partial x_1} \lambda_1 + \frac { \partial( pm_{11} )}{\partial x_2} \lambda_2+ \frac{1}{2}  \sum_{i=1}^2 \sum_{j=1}^2 (-1)^{i+j} \frac{\partial^2(pm_{11})}{\partial x_i \partial x_j} \lambda_i \lambda_j\\
T_{10} &\approx&  pm_{22} -\frac { \partial( pm_{22} )}{\partial x_1} \lambda_1 - \frac { \partial( pm_{22} )}{\partial x_2} \lambda_2+ \frac{1}{2}  \sum_{i=1}^2 \sum_{j=1}^2 (-1)^{i+j} \frac{\partial^2(pm_{22})}{\partial x_i \partial x_j} \lambda_i \lambda_j
\end{eqnarray*}
Substituting these expressions into (\ref{Equation5.1Allen}) and assuming that $\Delta t$, $\lambda_1 $ and $\lambda_2$ are small, then it is seen that $p(t, x_1 , x_2 )$ approximately solves the Fokker-Planck equation
\begin{eqnarray}\label{Equation5.2Allen}
\frac{\partial p(t, x_1 , x_2 )}{\partial t}&=& - \sum_{i=1}^2 \frac{\partial}{\partial x_1}\left [\mu_i(t, x_1 , x_2 ) p(t, x_1 , x_2 )\right]\nonumber \\ 
&&+  \frac{1}{2}\sum_{i=1}^2 \sum_{j=1}^2 \frac{\partial}{\partial x_i \partial x_j}\left[\sum_{k=1}^2 b_{ik}(t,x_1,x_2)  b_{jk}(t,x_1,x_2) p(t, x_1 , x_2 ) \right] 
\end{eqnarray} 
where $\mu=(\mu_1,\mu_2)$ and $B=\{b_{ij}\}_{1\leq i,j\leq 2}$. On the other hand, it is well known that the probability distribution $p(t, x_1, x_2)$ that solves equation (\ref{Equation5.2Allen}) coincides with the distribution of the solution at time $t$ to the following system of stochastic differential equations
\begin{equation}\label{SDEAllen}
dS = \mu(t, S) dt + B(t, S) dW(t),\quad S(0)=S_0
\end{equation}
where $W$ is a two-dimensional standard Brownian motion and $S_0$ is a given deterministic initial condition. The stochastic differential equation (\ref{SDEAllen}) describes the random evolution of the two-state system $S$ related to the changes described in Table \ref{tab:table1}. 

\subsection{The \emph{Greenhalgh et al.} \cite{Mao paper} model}\label{section Mao}

We now specialize the general model introduced in the previous section to the case investigated in Greenhalgh et al. \cite{Mao paper} (where the process $(S_1,S_2)$ is denoted as $(S,I)$). The values of the parameters in Table \ref{tab:table1} are chosen as follows: 

\begin{table}[h!]
  \centering
     \caption{Probabilities in \emph{Greenhalgh et al.}'s paper}
     \label{tab:table2}
     \begin{tabular}{lcl}
       \toprule Change & & Probability\\
       \midrule 

$\Delta \mathbf{S}^{(1)} = [-1,0]^{T} $&  & $ \mu S_1\Delta t $\\ 

$ \Delta \mathbf{S}^{(2)} = [1,0]^{T}       $ & & $ \mu N\Delta t $\\

$ \Delta \mathbf{S}^{(3)} = [0,-1]^{T} $ &  & $ \mu S_2\Delta t $\\

$ \Delta \mathbf{S}^{(4)} = [0,1]^{T} $ &  & $ 0 $\\
       
$ \Delta \mathbf{S}^{(5)} = [-1,1]^{T} $ & & $ \frac{\lambda(N)S_1S_2}{N}\Delta t $\\
       
$ \Delta \mathbf{S}^{(6)} = [1,-1]^{T} $&  & $ \gamma S_2\Delta t $\\

$ \Delta \mathbf{S}^{(7)} = [-1,-1]^{T} $ &  & $ 0 $\\

$ \Delta \mathbf{S}^{(8)} = [1,1]^{T} $ &  & $ 0 $\\

$ \Delta \mathbf{S}^{(9)} = [0,0]^{T} $ &  & $1 - \sum_{j=1}^{8}p_{j} $\\
      
       \bottomrule
     \end{tabular}
   \end{table}

\noindent where $N:=S_1+S_2$, $\lambda:[0,+\infty[\to [0,+\infty[$ is a continuous monotone increasing function and $\mu$ and $\gamma$ are positive constants. We refer to the paper \cite{Mao paper} for the biological interpretation of these quantities. Now, according to Table \ref{tab:table2} the vector $\mu$ and matrix $V$ in (\ref{mean and variance}) read
\begin{eqnarray*}
    \mu(t,S_1,S_2) = \begin{bmatrix} -\frac{\lambda(N)S_1S_2}{N}+(\mu+\gamma)S_2  \\ \frac{\lambda(N)S_1S_2}{N}-(\mu+\gamma)S_2 
   \end{bmatrix}
   \end{eqnarray*}
 and  
    \begin{eqnarray*}
     \begin{split} V(t,S_1,S_2) =\begin{bmatrix} a & b   \\ b & c
   \end{bmatrix} 
   \end{split}
   \end{eqnarray*}
where to ease the notation we set 
\begin{eqnarray*}
a&:=&\frac{\lambda(N)S_1S_2}{N}+(\mu+\gamma)S_2+2\mu S_1\\
b&:=&-\frac{\lambda(N)S_1S_2}{N}-\gamma S_2\\
c&:=&\frac{\lambda(N)S_1S_2}{N}+(\mu+\gamma)S_2. 
\end{eqnarray*}
Therefore,
   \begin{eqnarray*}
     \begin{split} B(t,S_1,S_2)=V(t,S_1,S_2)^{\frac{1}{2}}  =\frac{1}{d}\begin{bmatrix} a+w & b   \\ b & c+w
   \end{bmatrix}
   \end{split}
   \end{eqnarray*}
with 
\begin{eqnarray*}
w:=\sqrt{ac-b^2}\quad\mbox{ and }\quad d:=\sqrt{a+c+2w}. 
\end{eqnarray*}
We are then lead to study the following two dimensional system of stochastic differential equations
\begin{eqnarray}
\left\{ \begin{array}{ll}
dS_1&=\left[  -\frac{\lambda(N)S_1S_2}{N}+(\mu+\gamma)S_2 \right]dt+\frac{a+w}{d}dW_1+\frac{b}{d}dW_2\label{Mao 1}\\
dS_2&=\left[  \frac{\lambda(N)S_1S_2}{N}-(\mu+\gamma)S_2 \right]dt+\frac{b}{d}dW_1+\frac{c+w}{d}dW_2
\end{array}\right.
\end{eqnarray}
where $W=(W_1,W_2)$ is a standard two dimensional Brownian motion. We observe that by construction
\begin{eqnarray*}
\left( \frac{a+w}{d}\right)^2+\left( \frac{b}{d} \right)^2=a.
\end{eqnarray*}
Therefore, by the martingale representation theorem (see for instance Theorem 3.9 Chapter V in \cite{Revuz Yor}) there exists a Brownian motion $W_3$ such that the first equation in (\ref{Mao 1}) can be rewritten as
\begin{eqnarray*}\label{Mao 1 bis}
dS_1=\left[  -\frac{\lambda(N)S_1S_2}{N}+(\mu+\gamma)S_2 \right]dt+\sqrt{\frac{\lambda(N)S_1S_2}{N}+(\mu+\gamma)S_2+2\mu S_1}dW_3
\end{eqnarray*}
Similarly, since  
\begin{eqnarray*}
\left( \frac{b}{d} \right)^2+\left( \frac{c+w}{d}\right)^2=c
\end{eqnarray*}
by the martingale representation theorem there exists a Brownian motion $W_4$ such that the second equation in (\ref{Mao 1}) can be rewritten as
\begin{eqnarray*}\label{Mao 2 bis}
dS_2=\left[  \frac{\lambda(N)S_1S_2}{N}-(\mu+\gamma)S_2 \right]dt+\sqrt{\frac{\lambda(N)S_1S_2}{N}+(\mu+\gamma)S_2}dW_4.
\end{eqnarray*}
This implies that the system (\ref{Mao 1}) is equivalent to
\begin{eqnarray}
\left\{ \begin{array}{ll}
dS_1&=\left[  -\frac{\lambda(N)S_1S_2}{N}+(\mu+\gamma)S_2 \right]dt+\sqrt{\frac{\lambda(N)S_1S_2}{N}+(\mu+\gamma)S_2+2\mu S_1}dW_3 \label{Mao 2} \\
dS_2&=\left[  \frac{\lambda(N)S_1S_2}{N}-(\mu+\gamma)S_2 \right]dt+\sqrt{\frac{\lambda(N)S_1S_2}{N}+(\mu+\gamma)S_2}dW_4.
\end{array}\right.
\end{eqnarray}
We remark that by construction the Brownian motions $W_3$ and $W_4$ are now correlated. Moreover, if we notice that the drift of the first equation in (\ref{Mao 1}) is the opposite of the one in the second equation in (\ref{Mao 1}), recalling that $N=S_1+S_2$ we may write
\begin{eqnarray*}
dN=\frac{a+b+w}{d}dW_1+\frac{b+c+w}{d}dW_2
\end{eqnarray*}
and, exploiting the definitions of $a$, $b$, $c$, $d$ and $w$, we conclude as before that there exists a Brownian motion $W_5$ such that
\begin{eqnarray}\label{SDE for N}
dN=\sqrt{2\mu N}dW_5.
\end{eqnarray}
Hence, instead of studying the system (\ref{Mao 1}), the authors in \cite{Mao paper} study the equivalent system
\begin{eqnarray}
\left\{ \begin{array}{ll}\label{Mao triangular SDE}
dS_2&=\left[  \frac{\lambda(N)}{N}(N-S_2)S_2-(\mu+\gamma)S_2 \right]dt+\sqrt{\frac{\lambda(N)}{N}(N-S_2)S_2+(\mu+\gamma)S_2}dW_4\\
dN&=\sqrt{2\mu N}dW_5
\end{array}\right.
\end{eqnarray}
where the Brownian motions $W_4$ and $W_5$ are correlated. In the system (\ref{Mao triangular SDE}) the equation for $N$ does not depend on $S_2$ and it belongs to the family of the square root processes (\cite{Mao book}). Once the equation for $N$ is solved, the equation for $S_2$ contains random (for the presence of $N$) H\"older continuous coefficients. Moreover, due to the presence of the square root in the diffusion coefficient of $S_2$, the authors of \cite{Mao paper} consider a modified version of the first equation in (\ref{Mao triangular SDE}) to make the coefficients defined on the whole real line. They consider
\begin{eqnarray}\label{Mao final}
dS_2(t)=\bar{a}(t,N(t),S_2(t))dt+\bar{g}(t,N(t),S_2(t))dW_4(t)
\end{eqnarray} 
where
\begin{eqnarray*}
\bar{a}(t,y,x)=\left\{ \begin{array}{ll}
 0& \mbox{ for }x<0\\
\frac{\lambda(y)x}{y}(y-x)-(\mu+\gamma)x& \mbox{ for }0\leq x\leq y\left(1+\frac{\mu+\gamma}{\lambda(y)}\right)\\
\bar{a}\left(t,y,y\left(1+\frac{\mu+\gamma}{\lambda(y)}\right)\right)& \mbox{ for }x> y\left(1+\frac{\mu+\gamma}{\lambda(y)}\right)
\end{array}\right.
\end{eqnarray*}
and
\begin{eqnarray*}
\bar{g}(t,y,x)=\left\{ \begin{array}{ll}
 0& \mbox{ for }x<0\\
\sqrt{\frac{\lambda(y)x}{y}(y-x)+(\mu+\gamma)x}& \mbox{ for }0\leq x\leq y\left(1+\frac{\mu+\gamma}{\lambda(y)}\right)\\
0& \mbox{ for }x> y\left(1+\frac{\mu+\gamma}{\lambda(y)}\right)
\end{array}\right.
\end{eqnarray*}
The existence of a unique non explosive strong solution to equation (\ref{Mao final}) is obtained through a localization argument in terms of stopping times and comparison inequalities to control the non explosivity of the solution. In the next section we will consider a class of stochastic differential equations, which includes equation (\ref{Mao final}),  allowing for more general models where the existence of a unique non explosive strong solution is proved via a standard Caychy-Euler-Peano approximation method.  

\section{Main theorem}

Motivated by the discussion in the previous sections, we are now ready to state and prove the main result of our manuscript. We begin by specifying the class of coefficients involved in the stochastic differential equations under investigation.\\  
\noindent Let $g:[0,+\infty[\times\mathbb{R}\times\mathbb{R}\to\mathbb{R}$ be a function of the form
\begin{eqnarray}\label{def g}
g(t,y,x)=\sqrt{-x^2+\alpha(t,y)x+\beta(t,y)}
\end{eqnarray}
where $\alpha,\beta:[0,+\infty[\times\mathbb{R}\to\mathbb{R}$ are measurable functions satisfying the condition
\begin{eqnarray}\label{two roots condition}
\alpha(t,y)^2+4\beta(t,y)\geq0\quad\mbox{ for all }(t,y)\in[0,+\infty[\times\mathbb{R}.
\end{eqnarray}
We observe that condition (\ref{two roots condition}) implies that 
\begin{eqnarray*}
-x^2+\alpha(t,y)x+\beta(t,y)\geq 0\quad\mbox{ if and only if }\quad r_1(t,y)\leq x\leq r_2(t,y)
\end{eqnarray*}
where we set
\begin{eqnarray*}
r_1(t,y):=\frac{\alpha(t,y)-\sqrt{\alpha(t,y)^2+4\beta(t,y)}}{2}
\end{eqnarray*}
and
\begin{eqnarray*}
 r_2(t,y):=\frac{\alpha(t,y)+\sqrt{\alpha(t,y)^2+4\beta(t,y)}}{2}.
\end{eqnarray*}
Now, we define
\begin{eqnarray}\label{def bar g}
\bar{g}(t,y,x):=\left\{ \begin{array}{ll}\label{extended sigma}
0 &\mbox{if }\quad x<r_1(t,y) \\
g(t,y,x) &\mbox{if }\quad r_1(t,y)\leq x\leq r_2(t,y) \\
0 &\mbox{if }\quad x>r_2(t,y)
\end{array}\right.
\end{eqnarray}
The function $\bar{g}$ will be the diffusion coefficient of our stochastic differential equation. 
\begin{assumption}\label{assumption on g}
There exist a positive constant $M$ such that
\begin{eqnarray}\label{assumption on alpha}
|\alpha(t,y)|\leq M(1+|y|)\quad\mbox{ and }\quad |\beta(t,y)|\leq M(1+|y|)
\end{eqnarray}
for all $(t,y)\in [0,\infty[\times\mathbb{R}$. Moreover, there exists a positive constant $H$ such that
\begin{eqnarray}\label{holder g}
|\bar{g}(t,y_1,x_1)-\bar{g}(t,y_2,x_2)|\leq H(\sqrt{|y_1-y_2|}+\sqrt{|x_1-x_2|})
\end{eqnarray}
for all $t\in [0,\infty[$ and $y_1,y_2,x_1,x_2\in\mathbb{R}$.
\end{assumption}
We observe that assumption (\ref{assumption on alpha}) implies the bound 
\begin{eqnarray*}
|\bar{g}(t,y,x)|&\leq&\max_{x\in\mathbb{R}}|\bar{g}(t,y,x)|\\
&=&\sqrt{\frac{\alpha(t,y)^2}{4}+\beta(t,y)}\\
&\leq& M(1+|y|)
\end{eqnarray*}
for all $t\in [0,\infty[$ and $y\in\mathbb{R}$. Here the constant $M$ may differ from the one appearing in (\ref{assumption on alpha}); we will adopt this convention for the rest of the paper. We also remark that by construction inequality (\ref{holder g}) for $y_1=y_2$ is  satisfied with a constant $H=\sqrt{|\alpha(t,y_1)|}$.\\

\noindent We now introduce the drift coefficient of our SDE. We start with a measurable function $a:[0,+\infty[\times\mathbb{R}\times\mathbb{R}\to\mathbb{R}$ with the following property.
\begin{assumption}\label{assumption on a}
There exists a positive constant $M$ such that
\begin{eqnarray}\label{assumption growth a}
|a(t,y,x)|\leq M(1+|y|+|x|)
\end{eqnarray}
for all $t\in [0,\infty[$ and $x,y\in\mathbb{R}$. Moreover, there exists a positive constant $L$ such that
\begin{eqnarray}\label{lipschitz a}
|a(t,y_1,x_1)-a(t,y_2,x_2)|\leq L(|y_1-y_2|+|x_1-x_2|)
\end{eqnarray}
for all $t\in [0,\infty[$ and $y_1,y_2,x_1,x_2\in\mathbb{R}$.
\end{assumption}
Then, we set
\begin{eqnarray}\label{def bar a}
\bar{a}(t,y,x):=\left\{ \begin{array}{ll}\label{extended functions}
a(t,y,r_1(t,y)) &\mbox{if }\quad x<r_1(t,y) \\
a(t,y,x) &\mbox{if }\quad r_1(t,y)\leq x\leq r_2(t,y) \\
a(t,y,r_2(t,y)) &\mbox{if }\quad x>r_2(t,y)
\end{array}\right.
\end{eqnarray}
Observe that by construction also the function $\bar{a}$ satisfies Assumption \ref{assumption on a}.\\

\noindent We now consider the following one dimensional stochastic differential equation
\begin{eqnarray}\label{SDE}
dX_t=\bar{a}(t,Y_t,X_t)dt+\bar{g}(t,Y_t,X_t)dW^2_t,\quad X_0=x\in\mathbb{R}
\end{eqnarray}
where $\{Y_t\}_{t\geq 0}$ is the unique strong solution of the stochastic differential equation
\begin{eqnarray}\label{SDE Y}
dY_t=m(t,Y_t)dt+\sigma(t,Y_t)dW^1_t,\quad Y_0=y\in\mathbb{R}.
\end{eqnarray} 
Here $\{(W^1_t,W_t^2)\}_{t\geq 0}$ is a two dimensional correlated Brownian motion defined on a complete filtered probability space $(\Omega,\mathcal{F},\mathbb{P}, \{\mathcal{F}_t\}_{t\geq 0})$ where the filtration $\{\mathcal{F}_t\}_{t\geq 0}$ is generated by the process $\{(W_t^1,W_t^2)\}_{t\geq 0}$. Strong solutions are meant to be $\{\mathcal{F}_t\}_{t\geq 0}$-adapted. \\
Regarding equation (\ref{SDE Y}), the coefficients $m$ and $\sigma$ are assumed to entail existence and uniqueness of a strong solution $\{Y_t\}_{t\geq 0}$ such that
\begin{eqnarray*}
E\Big[\sup_{t\in [0,T]}|Y_t|^2\Big]\mbox{ is finite for all }T>0.
\end{eqnarray*}  
Equations (\ref{SDE}) and (\ref{SDE Y}) describe a class of equations which includes equations (\ref{Mao final}) and (\ref{SDE for N}) as a particular case. 

\begin{remark}
If $r_1(t,y)=r_2(t,y)$ for all $(t,y)\in [0,\infty[\times\mathbb{R}$, which is equivalent to say that $\alpha(t,y)^2+4\beta(t,y)=0$, then the diffusion coefficient $\bar{g}$ is identically zero and the drift coefficient becomes $\bar{a}(t,y,x)=a(t,y,\alpha(t,y)/2)$. Therefore, in this particular case the SDE (\ref{SDE}) takes the form
\begin{eqnarray*}
dX_t=a(t,Y_t,\alpha(t,Y_t)/2)dt,\quad X_0=x
\end{eqnarray*}
whose solution is explicitly given by the formula
\begin{eqnarray*}
X_t=x+\int_0^ta(s,Y_s,\alpha(s,Y_s)/2)ds.
\end{eqnarray*}
\end{remark}

\begin{theorem}[Strong existence and uniqueness]\label{main theorem}
Let Assumption \ref{assumption on g} and Assumption \ref{assumption on a} be fulfilled. Then, the stochastic differential equation (\ref{SDE}) possesses a unique strong solution $\{X_t\}_{t\geq 0}$.
\end{theorem}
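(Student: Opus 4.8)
The plan is to construct the solution of (\ref{SDE}) \emph{directly} as the limit of a Cauchy--Euler--Peano scheme, obtaining strong existence, and then to deduce pathwise uniqueness from a Yamada--Watanabe argument adapted to the $1/2$-H\"older estimate (\ref{holder g}). Fix $T>0$ and work on $[0,T]$. For each $n\geq 1$ set $t^n_k=kT/2^n$ and let $\kappa_n(s):=t^n_k$ for $s\in[t^n_k,t^n_{k+1})$. Define the approximants recursively by freezing only the spatial argument at the left grid point (keeping the given adapted process $Y$ continuous inside the coefficients):
\begin{eqnarray*}
X^{(n)}_t=x+\int_0^t\bar{a}(s,Y_s,X^{(n)}_{\kappa_n(s)})\,ds+\int_0^t\bar{g}(s,Y_s,X^{(n)}_{\kappa_n(s)})\,dW^2_s.
\end{eqnarray*}
Since the integrand is piecewise constant in its last argument, each $X^{(n)}$ is well defined and $\{\mathcal{F}_t\}_{t\ge0}$-adapted.

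First I would establish uniform moment bounds. Using the growth bound $|\bar a(t,y,x)|\le M(1+|y|+|x|)$ from (\ref{assumption growth a}) and $|\bar g(t,y,x)|\le M(1+|y|)$ from the remark after Assumption \ref{assumption on g} (note $\bar g$ does not grow in $x$), together with Burkholder--Davis--Gundy and Gr\"onwall, one gets $\sup_n E[\sup_{t\le T}|X^{(n)}_t|^2]\le C_T<\infty$, with $C_T$ depending on $x$, $M$, $T$ and $E[\sup_{t\le T}|Y_t|^2]$. The same estimates give the one-step control $E[\,|X^{(n)}_s-X^{(n)}_{\kappa_n(s)}|^2\,]\le C_T\,2^{-n}$, which measures how far the frozen value sits from the running value; this is the ingredient that lets the discretization error interact harmlessly with the H\"older bound below.

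The heart of the proof, and the step I expect to be the main obstacle, is showing $\{X^{(n)}\}$ is Cauchy. Following Yamada--Watanabe, pick $1=a_0>a_1>\cdots\downarrow0$ with $\int_{a_k}^{a_{k-1}}u^{-1}\,du=k$ and smooth $\rho_k$ supported in $(a_k,a_{k-1})$ with $0\le\rho_k(u)\le 2/(ku)$ and $\int\rho_k=1$; set $\psi_k(z)=\int_0^{|z|}\!\int_0^y\rho_k(u)\,du\,dy$, so $\psi_k\in C^2$, $\psi_k(z)\to|z|$, $|\psi_k'|\le1$ and $\psi_k''(z)=\rho_k(|z|)$. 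Writing $D_s:=X^{(n)}_s-X^{(m)}_s$ and applying It\^o to $\psi_k(D_t)$, the martingale part has zero mean, the drift part is handled by (\ref{lipschitz a}) and $|\psi_k'|\le1$, and the It\^o correction is
\begin{eqnarray*}
\tfrac12\,E\!\int_0^t\psi_k''(D_s)\big(\bar g(s,Y_s,X^{(n)}_{\kappa_n(s)})-\bar g(s,Y_s,X^{(m)}_{\kappa_m(s)})\big)^2\,ds.
\end{eqnarray*}
By (\ref{holder g}) with $y_1=y_2=Y_s$ the squared difference is $\le H^2|X^{(n)}_{\kappa_n(s)}-X^{(m)}_{\kappa_m(s)}|$, while $\psi_k''(D_s)\le 2/(k|D_s|)$. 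Replacing the frozen arguments by the running ones via the triangle inequality splits this into a clean term $\le H^2t/k$ and a remainder controlled, on the support $\{a_k\le|D_s|\le a_{k-1}\}$, by $\tfrac{H^2}{k\,a_k}$ times the one-step increments, hence bounded by $\tfrac{C}{k\,a_k}(2^{-n/2}+2^{-m/2})$. The delicate point is exactly this interchange: for fixed $k$ the remainder vanishes as $n,m\to\infty$, but $1/(k\,a_k)$ diverges, so one must \emph{first} send $n,m\to\infty$ and only afterwards $k\to\infty$.

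Carrying out this ordered double limit on $u_{n,m}(t):=E|X^{(n)}_t-X^{(m)}_t|$ (using $E|D_t|\le E\psi_k(D_t)+\epsilon_k$ with $\epsilon_k\to0$) yields, for $\bar u(t):=\limsup_{n,m}u_{n,m}(t)$, the inequality $\bar u(t)\le L\int_0^t\bar u(s)\,ds$, whence $\bar u\equiv0$ by Gr\"onwall; the uniform second moments then upgrade this to convergence of $X^{(n)}$ in $L^2$, uniformly on $[0,T]$, to a continuous adapted limit $X$. Passing to the limit in the integral equation (drift by (\ref{lipschitz a}) and dominated convergence, stochastic integral by the It\^o isometry together with (\ref{holder g}) and the one-step estimate) identifies $X$ as a strong solution of (\ref{SDE}). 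Uniqueness follows from the very same $\psi_k$-computation applied to two solutions $X,\tilde X$ with common initial datum and noise: there no discretization remainder arises, the correction is bounded cleanly by $H^2t/k\to0$, and Gr\"onwall forces $E[\,\sup_{t\le T}|X_t-\tilde X_t|\,]=0$. Since $T$ is arbitrary, this produces a unique strong solution on $[0,+\infty[$.
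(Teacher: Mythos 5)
Your proposal follows essentially the same route as the paper: a Cauchy--Euler--Peano scheme whose Cauchy property is established with the Yamada--Watanabe functions $\psi_k$ (the paper's $\theta_h$), the same ordered double limit (mesh first for fixed $k$, then $k\to\infty$), Gronwall, a maximal-inequality upgrade to uniform-in-$t$ convergence, and the identical $\psi_k$ computation for uniqueness. The only deviation --- freezing just the $X$-argument while leaving $Y_s$ running in the coefficients --- is a harmless variant that spares you the modulus-of-continuity estimate for $Y$ along the two partitions which the paper has to invoke.
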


\begin{proof}
To ease the notation we consider the time-homogeneous case and hence we drop the explicit dependence on $t$ from all the  coefficients. \\
\noindent We fix an arbitrary $T >0 $ and prove existence and uniqueness of a solution for the SDE
\begin{eqnarray}\label{eq:25}
X_t=x+\int_0^t\bar{a}(Y_s,X_s)ds+\int_0^t\bar{g}(Y_s,X_s)dW^2_s,\quad X_0=x.
\end{eqnarray}
on the time interval $t\in [0,T]$. The proof for the existence is rather long and proceeds as
follows: using a Cauchy-Euler-Peano approximate solutions technique we
define, associated to a partition $ \Delta_{n} $ of $ [0,T] $ a
stochastic process $ X^n $. We will, at the beginning, prove a
convergence result for  $ X^n $ in the space $L^{1}([0,T]\times\Omega) $, then we will prove a convergence result
for  $X^n$ in the space $ \mathcal{C}[0,T] $ with the norm of
the uniform convergence and this will eventually yield the result.\\

\noindent\textbf{Existence}: We consider a sequence of partitions $\{\Delta_n\}_{n\geq 1}$ of the interval $ [0,T]$ with $\Delta_{n} \subseteq \Delta_{n+1}$. Each partition $\Delta_n$ will consist of a set of $N_n+1$ points $\{t^n_0, t^n_1,..., t^n_{N_n}\}$ satisfying 
\begin{eqnarray*}
0= t^n_{0} < t^n_{1} < \cdots < t^n_{N_n} = T.
\end{eqnarray*} 
We denote by $\Vert\Delta_{n}\Vert:= \max_{0\leq k \leq N_n-1}|t_{k+1}^n -t_{k}^n|$, the mesh of the partition $\Delta_n$, and assume that $\lim_{n\to\infty}\Vert\Delta_{n}\Vert = 0$. In the sequel, we will write $t_k$ instead of $t_k^n$ when the membership to the partition $\Delta_n$ will be clear from the context.\\ 
For a given partition $\Delta_{n} $ we construct a continuous and $\{\mathcal{F}_t\}_{t\geq 0}$-adapted stochastic process $\{ X^n_t\}_{t\in [0,T]}$ as follows: for $t=0$ we set $X_t^n=x$ while for 
$t \in ]t_k,t_{k+1}]$ we define
\begin{eqnarray}\label{euler} 
X^n_t := X^n_{t_k}+\bar{a}(Y_{t_k},X^n_{t_k})(t - t_k) + g(Y_{t_k},X^n_{t_k})(W_t-W_{t_k}).
\end{eqnarray}
It is useful to observe that, denoting $\eta_{n}(t) = t_k $ when $ t \in ]t_k,
t_{k+1}]$, we may represent $ X^n_t $ in the compact form:
\begin{eqnarray}\label{compact euler}
 X^n_t = x+\int_{0}^{t}\bar{a}(Y_{\eta_{n}(s)},X^n_{\eta_{n}(s)})ds+\int_{0}^{t}\bar{g}(Y_{\eta_{n}(s)}, X^n_{\eta_{n}(s)})dW^2_s.
\end{eqnarray}

\noindent \textbf{Step one:}\emph{ $ \E|X^n_{\eta_{n}(t)}| $ is
uniformly bounded with respect to $ n $ and $ t $}\\

\noindent We begin with equation (\ref{euler}). Using the triangle inequality and upper bounds for $\bar{a}$ and $\bar{g}$ we get
\begin{eqnarray*}\label{eq:30}
\E[|X^n_{t_{k+1}}|] &\leq & \E[|X^n_{t_k}|] + \E [|\bar{a}(Y_{t_k},X^n_{t_k})(t_{k+1}- t_k)|]\\
&& +\E[|\bar{g}(Y_{t_k},X^n_{t_k})(W_{t_{k+1}} - W_{t_k})|]\\
&\leq & \E[|X^n_{t_k}|] + M|t_{k+1}- t_k|\E \left[1+|Y_{t_k}|\right]+M|t_{k+1}- t_k|\E \left[|X^n_{t_k}|\right]\\
&&+M\E \left[(1+|Y_{t_k}|)|W_{t_{k+1}} - W_{t_k}|\right]\\
&\leq & (1+M\Vert\Delta_n\Vert)\E[|X^n_{t_k}|] + M|t_{k+1}- t_k|\E\left[1+|Y_{t_k}|\right]\\
&&+\frac{M}{2}\left(\E\left[(1+|Y_{t_k}|)^2\right]+\E\left[|W_{t_{k+1}} - W_{t_k}|^{2}\right]\right)\\
&\leq & (1+M\Vert\Delta_n\Vert)\E[|X^n_{t_k}|] + M\Vert\Delta_n\Vert\sup_{t\in [0,T]}\E\left[1+|Y_{t}|\right]\\
&&+\frac{M}{2}\sup_{t\in [0,T]}\E\left[(1+|Y_{t}|)^2\right]+\frac{M}{2}|t_{k-1}-t_k|\\
&\leq & (1+M\Vert\Delta_n\Vert)\E[|X^n_{t_k}|] + M\Vert\Delta_n\Vert\sup_{t\in [0,T]}\E\left[1+|Y_{t}|\right]\\
&&+\frac{M}{2}\sup_{t\in [0,T]}\E\left[(1+|Y_{t}|)^2\right]+\frac{M}{2}\Vert\Delta_n\Vert\\
&\leq&(1+M\Vert\Delta_n\Vert)\E[|X^n_{t_k}|]+\frac{M}{2} \sup_{t\in [0,T]}\E\left[(1+|Y_{t}|)^2\right]+\varepsilon.
\end{eqnarray*}
Here we used the fact that $\Vert\Delta_n\Vert$ tends to zero as $n$ tends to infinity and that $\sup_{t\in [0,T]}\E\left[1+|Y_{t}|\right]$ is finite: we can therefore choose $n$ big enough to make
\begin{eqnarray*}
M\Vert\Delta_n\Vert\sup_{t\in [0,T]}\E\left[1+|Y_{t}|\right]+\frac{M}{2}\Vert\Delta_n\Vert
\end{eqnarray*}
smaller than a given positive $\varepsilon$. Comparing the first and last terms of the previous chain of inequalities we get for all $k\in\{0,...,N_n-1\}$
\begin{eqnarray*}
\E[|X^n_{t_{k+1}}|] \leq(1+M\Vert\Delta_n\Vert)\E[|X^n_{t_k}|]+\frac{M}{2} \sup_{t\in [0,T]}\E\left[(1+|Y_{t}|)^2\right]+\varepsilon
\end{eqnarray*}
which by recursion implies 
\begin{eqnarray*}
\E[|X^n_{t_{k}}|] &\leq& \gamma_1^{k}|x|+\frac{\gamma_1^{k}-1}{\gamma_1-1}\gamma_2\\
&\leq& \gamma_1^{N_n}|x|+\frac{\gamma_1^{N_n}-1}{\gamma_1-1}\gamma_2
\end{eqnarray*}
where for notational convenience we set
\begin{eqnarray*}
\gamma_1:=1+M\Vert\Delta_n\Vert\quad\mbox{ and }\quad\gamma_2:=\frac{M}{2} \sup_{t\in [0,T]}\E\left[(1+|Y_{t_k}|)^2\right]+\varepsilon.
\end{eqnarray*}
Since $ \eta_{n}(t) $ is a step function in $ [0,T]$ with values $\{t_0, t_1,..., t_{N_n}\}$, the previous estimate for $k\in\{0,...,N_n-1\}$ entails the boundedness of the function $[0,T]\ni t\to E[|X^n_{\eta_{n}(t)}|]$.\\
We now obtain an estimate for $E[|X^n_{\eta_{n}(t)}|]$ which is also uniform with respect to $n$. Using the triangle inequality in (\ref{compact euler}) we can write 
\begin{eqnarray}\label{eq:42}
\E[|X^n_{\eta_{n}(t)}| ]&\leq& |x| +\E\left[\left|\int_{0}^{\eta_{n}(t)}\bar{a}(Y_{\eta_{n}(s)},X^n_{\eta_{n}(s)})ds\right|\right]\nonumber\\
&&+\E\left[\left|\int_{0}^{\eta_{n}(t)}\bar{g}(Y_{\eta_{n}(s)},X^n_{\eta_{n}(s)})dW^2_s\right|\right].
\end{eqnarray}
For the first expected value on the right hand side above we employ the assumptions on $\bar{a}$:
\begin{eqnarray*}
\E\left[\left|\int_{0}^{\eta_{n}(t)}\bar{a}(Y_{\eta_{n}(s)},X^n_{\eta_{n}(s)})ds\right|\right]&\leq&\E\left[\int_{0}^{t}|\bar{a}(Y_{\eta_{n}(s)},X^n_{\eta_{n}(s)})|ds\right]\\
&\leq&M\E\left[\int_{0}^{t}(1+|X^n_{\eta_{n}(s)}|+|Y_{\eta_{n}(s)}|)ds\right]\\
&=&M\int_0^t\E[|X^n_{\eta_{n}(s)}|]ds+M\int_{0}^{t}\E\left[1+|Y_{\eta_{n}(s)}|\right]ds\\
&\leq&M\int_0^t\E[|X^n_{\eta_{n}(s)}|]ds+MT\sup_{t\in [0,T]}\E\left[1+|Y_{t}|\right].
\end{eqnarray*}
Using the It\^o isometry and the assumptions on $\bar{g}$ we can treat the second expected value as follows:
\begin{eqnarray*}
\E\left[\left|\int_{0}^{\eta_{n}(t)}\bar{g}(Y_{\eta_{n}(s)},X^n_{\eta_{n}(s)})dW^2_s\right|\right]&\leq&\left(\E\left[\left|\int_{0}^{\eta_{n}(t)}\bar{g}(Y_{\eta_{n}(s)},X^n_{\eta_{n}(s)})dW^2_s\right|^2\right]\right)^{\frac{1}{2}}\\
&\leq&\left(\int_{0}^{t}\E[|\bar{g}(Y_{\eta_{n}(s)},X^n_{\eta_{n}(s)})|^2]ds\right)^{\frac{1}{2}}\\
&\leq& M\left(\int_{0}^{t}\E[(1+|Y_{\eta_{n}(s)}|)^2]ds\right)^{\frac{1}{2}}\\
&\leq&M\sqrt{T\sup_{t\in [0,T]}\E[(1+|Y_t|)^2]}.
\end{eqnarray*}
Plugging the last two estimates in (\ref{eq:42}) gives
\begin{eqnarray*}
\E[|X^n_{\eta_{n}(t)}| ]&\leq& |x| +\E\left[\left|\int_{0}^{\eta_{n}(t)}\bar{a}(Y_{\eta_{n}(s)},X^n_{\eta_{n}(s)})ds\right|\right]\nonumber\\
&&+\E\left[\left|\int_{0}^{\eta_{n}(t)}\bar{g}(Y_{\eta_{n}(s)},X^n_{\eta_{n}(s)})dW^2_s\right|\right]\\
&\leq&|x|+M\int_0^t\E[|X^n_{\eta_{n}(s)}|]ds+MT\sup_{t\in [0,T]}\E\left[1+|Y_{t}|\right]\\
&&+M\sqrt{T\sup_{t\in [0,T]}\E[(1+|Y_t|)^2]}\\
&=&G+M\int_0^t\E[|X^n_{\eta_{n}(s)}|]ds
\end{eqnarray*}
where
\begin{eqnarray*}
G:=|x|+MT\sup_{t\in [0,T]}\E\left[1+|Y_{t}|\right]+M\sqrt{T\sup_{t\in [0,T]}\E[(1+|Y_t|)^2]}.
\end{eqnarray*}
By the Gronwall inequality (we proved before that $t\to\E[|X^n_{\eta_{n}(t)}| ]$ is a non negative, bounded and measurable function) we conclude that
\begin{eqnarray}\label{uniform bound}
\E[|X^n_{\eta_{n}(t)}| ]\leq Ge^{Mt}\leq Ge^{MT}
\end{eqnarray}
which provides the desired uniform bound  (with respect to $n$ and $t$) for $\E[|X^n_{\eta_{n}(t)}| ]$.\\

\noindent\textbf{Step two:}  \emph{ $\E[|X_t^n -X^n_{\eta_{n}(t)}|]$ tends to zero as $n$ tends to infinity, uniformly with respect to $t\in [0,T]$}\\

\noindent We proceed as in step one. Recalling the  identity (\ref{compact euler}) we can write
\begin{eqnarray*}
\E[|X_t^n -X^n_{\eta_{n}(t)}|]&=&\E\left[\left|\int_{\eta_n(t)}^{t}\bar{a}(Y_{\eta_{n}(s)},X^n_{\eta_{n}(s)})ds+\int_{\eta_n(t)}^{t}\bar{g}(Y_{\eta_{n}(s)}, X^n_{\eta_{n}(s)})dW^2_s\right|\right]\\
&\leq&\int_{\eta_n(t)}^{t}\E[|\bar{a}(Y_{\eta_{n}(s)},X^n_{\eta_{n}(s)})|]ds+\E\left[\left|\int_{\eta_n(t)}^{t}\bar{g}(Y_{\eta_{n}(s)}, X^n_{\eta_{n}(s)})dW^2_s\right|\right]\\
&\leq&M\int_{\eta_n(t)}^{t}\E[(1+|X^n_{\eta_{n}(s)}|+|Y_{\eta_{n}(s)}|)]ds\\
&&+\left(\E\left[\left|\int_{\eta_n(t)}^{t}\bar{g}(Y_{\eta_{n}(s)}, X^n_{\eta_{n}(s)})dW^2_s\right|^2\right]\right)^{\frac{1}{2}}\\
&\leq&M(t-\eta_n(t))\left(Ge^{MT}+\sup_{t\in [0,T]}\E[1+|Y_t|]\right)\\
&&+\left(\int_{\eta_n(t)}^{t}\E[|\bar{g}(Y_{\eta_{n}(s)}, X^n_{\eta_{n}(s)})|^2]ds\right)^{\frac{1}{2}}\\
&\leq&M(t-\eta_n(t))\left(Ge^{MT}+\sup_{t\in [0,T]}\E[1+|Y_t|]\right)\\
&&+M\sqrt{t-\eta_n(t)}\sqrt{\sup_{t\in [0,T]}\E[(1+|Y_t|)^2]}\\
&\leq&M\sqrt{\Vert\Delta_n\Vert}\left(Ge^{MT}+\sup_{t\in [0,T]}\E[1+|Y_t|]+\sqrt{\sup_{t\in [0,T]}\E[(1+|Y_t|)^2]}\right).
\end{eqnarray*}
Here, in the third equality, we utilized the uniform upper bound (\ref{uniform bound}). We have therefore proved that
\begin{eqnarray*}
\E[|X_t^n -X^n_{\eta_{n}(t)}|]&\leq&M\sqrt{\Vert\Delta_n\Vert}\left(Ge^{MT}+\sup_{t\in [0,T]}\E[1+|Y_t|]+\sqrt{\sup_{t\in [0,T]}\E[(1+|Y_t|)^2]}\right)\\
&=:&M_1\sqrt{\Vert\Delta_n\Vert}
\end{eqnarray*}
This in turn implies that $\E[|X_t^n -X^n_{\eta_{n}(t)}|]$ tends to zero as $n$ tends to infinity, uniformly with respect to $t\in [0,T]$.\\

\noindent\textbf{Step three:}  \emph{$\{X^n\}_{n \geq 1}$ is a Cauchy
sequence in $L^{1}([0,T]\times \Omega)$}. \\

\noindent We need to prove that for any $\varepsilon>0$ there exists $n_{\varepsilon}\in\mathbb{N}$ such that
\begin{eqnarray*}
\E\left[\int_0^T|X^n_t-X^m_t|dt\right]<\varepsilon\quad\mbox{ for all }n,m\geq n_{\varepsilon}.
\end{eqnarray*}
We have:
\begin{eqnarray*}\label{eq:48} 
X^n_t -X^m_t &=&\int_{0}^{t}\left[\bar{a}(Y_{\eta_n(s)}.X^n_{\eta_{n}(s)}) -\bar{a}(Y_{\eta_m(s)},X^m_{\eta_{m}(s)})\right]ds \\  
&&+\int_{0}^{t}\left[\bar{g}(Y_{\eta_n(s)},X^n_{\eta_{n}(s)}) -
\bar{g}(Y_{\eta_m(s)},X^m_{\eta_{m}(s)})\right]dW^2_s
\end{eqnarray*}
We now aim to apply the It\^o formula to the stochastic process $\{X^n_t-X^m_t\}_{t\in [0,T]}$ for a suitable smooth function that we now describe.\\
Consider the decreasing sequence of real numbers $\{a_h\}_{h\geq 0}$ defined by induction as follows:
\begin{eqnarray*}
a_0=1\quad\mbox{and for $h\geq 1$ }\int^{a_{h}}_{a_{h-1}}\frac{1}{u}du=h.
\end{eqnarray*}
It is easy to see that $\displaystyle a_{h}=e^{-\frac{h(h+1)}{2}} $ and therefore that  $\lim_{h\to +\infty} a_{h}=0$. Define the function 
$\Phi_{h}(u)$ for $u\in [0,\infty) $ such that  $\Phi_{h}(0) =0 $, $\Phi_{h}(u) \in \mathcal{C}^{2}([0,\infty[) $  and
\begin{equation}\label{eq:50} 
\Phi_{h}''(u) =
 \begin{cases} 0, & 0\leq u \leq a_{h}\\
       \mathrm{a~ value~
         between~} 0 \mathrm{~and~} \frac{2}{hu},& a_{h} < u < a_{h-1}
       \\ 0, & u \geq a_{h-1}
     \end{cases}
\end{equation}
with
\begin{eqnarray*}
\int_{a_{h}}^{a_{h-1}}\Phi_{h}''(u)du =1.
\end{eqnarray*}
Integrating $\Phi_{h}'' $ we get
   \begin{equation}
     \label{eq:51} \Phi_{h}'(u) =
     \begin{cases} 0, & 0\leq u \leq a_{h}\\ \mathrm{a~ value~
between~} 0 \mathrm{~and~}1,& a_{h} < u < a_{h-1}\\ 1, &  u \geq
a_{h-1}
     \end{cases}
   \end{equation}
Finally we choose $ \theta_{h}(u) = \Phi_{h}(|u|) $. Then, we have:
\begin{eqnarray*}\label{eq:49} 
\theta_h(X^n_t -X^m_t)&=&\int_{0}^{t}\theta_h'(X^n_s -X^m_s)\left[\bar{a}(Y_{\eta_n(s)}.X^n_{\eta_{n}(s)}) -\bar{a}(Y_{\eta_m(s)},X^m_{\eta_{m}(s)})\right]ds\\ 
&&+\int_{0}^{t}\theta_h'(X^n_s -X^m_s)\left[\bar{g}(Y_{\eta_n(s)},X^n_{\eta_{n}(s)}) -
\bar{g}(Y_{\eta_m(s)},X^m_{\eta_{m}(s)})\right]dW^2_s\\
&&+\frac{1}{2}\int_0^t\theta''(X^n_s -X^m_s)\left[\bar{g}(Y_{\eta_n(s)},X^n_{\eta_{n}(s)}) -
\bar{g}(Y_{\eta_m(s)},X^m_{\eta_{m}(s)})\right]^2ds\\
&=:& I_{1}(\theta_h)+I_{2}(\theta_h) + I_{3}(\theta_h)
\end{eqnarray*}
Since for any $h\geq 0$ and $u\in\mathbb{R}$ we have by construction that $|u|-a_{h-1}\leq \theta_h(u)$, we can write
\begin{eqnarray}\label{L^1 estimate}
\E[|X^n_t-X^m_t|]&\leq&a_{h-1}+\E[\theta_h(X^n_t-X^m_t)]\nonumber\\
&=&a_{h-1}+\E[I_{1}(\theta_h)+I_{2}(\theta_h) + I_{3}(\theta_h)]\nonumber\\
&=&a_{h-1}+\E[I_{1}(\theta_h)]+\E[I_{3}(\theta_h)].
\end{eqnarray} 
Let us now estimate $\E[|I_{1}(\theta_{h})|]$:
\begin{eqnarray*} 
\E[|I_{1}(\theta_{h})|]&=&\E\left[\left| \int_{0}^{t}\theta_h'(X^n_s -X^m_s)\left[\bar{a}(Y_{\eta_n(s)}.X^n_{\eta_{n}(s)}) -\bar{a}(Y_{\eta_m(s)},X^m_{\eta_{m}(s)})\right]ds\right|\right]\\
 &\leq& \E\left[ \int_{0}^{t}|\theta_h'(X^n_s -X^m_s)|\cdot |\bar{a}(Y_{\eta_n(s)}.X^n_{\eta_{n}(s)}) -\bar{a}(Y_{\eta_m(s)},X^m_{\eta_{m}(s)})|ds\right]\\
 &\leq& \E\left[ \int_{0}^{t}|\bar{a}(Y_{\eta_n(s)}.X^n_{\eta_{n}(s)}) -\bar{a}(Y_{\eta_m(s)},X^m_{\eta_{m}(s)})|ds\right]\\
 &\leq&L\int_{0}^{t}\E[|X^n_{\eta_{n}(s)}-X^m_{\eta_{m}(s)}|]ds+L\int_{0}^{t}\E[|Y_{\eta_{n}(s)}-Y_{\eta_{m}(s)}|]ds
 \end{eqnarray*}
In the second inequality we utilized the bound $ |\theta_{h}'(u)| \leq 1 $ which is valid for all $h\geq 0$ and $u\in\mathbb{R}$. By means of the estimate obtained in step two we can write
\begin{eqnarray*}
\E[|X^n_{\eta_{n}(s)}-X^m_{\eta_{m}(s)}|]&\leq&\E[|X^n_{\eta_{n}(s)}-X^n_s|]+\E[|X^n_s-X^m_s|]+\E[|X^m_s-X^m_{\eta_{m}(s)}|]\\
&\leq&M_1(\sqrt{\Vert\Delta_n\Vert}+\sqrt{\Vert\Delta_m\Vert})+\E[|X^n_s-X^m_s|].
\end{eqnarray*}
Similarly we get
\begin{eqnarray*}
\E[|Y_{\eta_{n}(s)}-Y_{\eta_{m}(s)}|]&\leq& \E[|Y_{\eta_{n}(s)}-Y_s|]+\E[Y_s-Y_{\eta_{m}(s)}|]\\
&\leq&C(\sqrt{\Vert\Delta_n\Vert}+\sqrt{\Vert\Delta_m\Vert})
\end{eqnarray*}   
 where the last inequality is due to well known estimates for strong solutions of stochastic differential equations. Combining the last two bounds we conclude that 
 \begin{eqnarray}\label{I_1} 
\E[|I_{1}(\theta_{h})|]&\leq&L\int_{0}^{t}\E[|X^n_{\eta_{n}(s)}-X^m_{\eta_{m}(s)}|]ds+L\int_{0}^{t}\E[|Y_{\eta_{n}(s)}-Y_{\eta_{m}(s)}|]ds\nonumber\\
&\leq&TL(M_1+C)(\sqrt{\Vert\Delta_n\Vert}+\sqrt{\Vert\Delta_n\Vert})+L\int_0^t\E[|X^n_s-X^m_s|]ds.
\end{eqnarray}
We now treat $\E [I_{3}(\theta_{h})]$; by the assumption (\ref{holder g}) and properties of $\theta_h$ we get:
\begin{eqnarray}\label{I_3}
\E [I_{3}(\theta_{h})]&=&\frac{1}{2}\E\left[\int_0^t\theta_h''(X^n_s -X^m_s)(\bar{g}(Y_{\eta_n(s)},X^n_{\eta_{n}(s)})-\bar{g}(Y_{\eta_m(s)},X^m_{\eta_{m}(s)}))^{2}ds\right] \nonumber\\ 
&\leq&\frac{H^2}{2}\E\left[\int_0^t\theta_h''(X^n_s -X^m_s)\left(\sqrt{|X^n_{\eta_{n}(s)}-X^m_{\eta_{m}(s)}|}+\sqrt{|Y_{\eta_{n}(s)}-Y_{\eta_{m}(s)}|}\right)^2ds\right] \nonumber\\
&\leq&H^2\E\left[\int_0^t\theta_h''(X^n_s -X^m_s)\left(|X^n_{\eta_{n}(s)}-X^m_{\eta_{m}(s)}|+|Y_{\eta_{n}(s)}-Y_{\eta_{m}(s)}|\right)ds\right] \nonumber\\
&\leq& H^2\E\left[\int_{0}^{t}\frac{2}{h|X^n_s -X^m_s|}|X^n_s -X^m_s|ds\right]\nonumber\\
&&+H^2\Vert\theta_{h}''\Vert\E\left[\int_{0}^{t}(|X^n_{\eta_{n}(s)} - X^n_s| +|X^m_{\eta_{m}(s)}- X^m_s|)ds\right]\nonumber\\ 
&&+H^2\Vert\theta_{h}''\Vert\E\left[\int_{0}^{t}(|Y_{\eta_{n}(s)} - Y_s| +|Y_s-Y_{\eta_{m}(s)}|)ds\right]\nonumber\\
&\leq& \frac{2H^2T}{h}+\Vert\theta_{h}''\Vert TH^2(M_1+C)(\sqrt{\Vert\Delta_n\Vert}+\sqrt{\Vert\Delta_n\Vert}).
\end{eqnarray}
Here $\Vert\theta_{h}''\Vert$ denotes the supremum norm of $\theta_h''$ while in the last inequality we used the same bound to obtain inequality (\ref{I_1}). Now, let us fix $\varepsilon >0 $. For this $\varepsilon $ let $ h $ be such that $ 0 < a_{h-1}< \varepsilon $ and $ \frac{2H^2T}{h}< \varepsilon$. With this $ h $ being so chosen and fixed, $\Vert\theta''_{h}\Vert$ is bounded. Then, there exists $ n_{\varepsilon} \in \N $ such that
\begin{eqnarray*} 
(M_1+C)(T+\Vert\theta_{h}''\Vert TH^2)(\sqrt{\Vert\Delta_n\Vert}+\sqrt{\Vert\Delta_n\Vert})<\varepsilon
\end{eqnarray*}
 for all $ n,m \geq n_{\varepsilon} $. We can now insert estimates (\ref{I_1}) and (\ref{I_3}) in (\ref{L^1 estimate}) to obtain
 \begin{eqnarray*}
 \E[|X^n_t-X^m_t|]&\leq&a_{h-1}+\E[I_{1}(\theta_h)]+\E[I_{3}(\theta_h)]\\
 &\leq& a_{h-1}+TL(M_1+C)(\sqrt{\Vert\Delta_n\Vert}+\sqrt{\Vert\Delta_n\Vert})+L\int_0^t\E[|X^n_s-X^m_s|]ds\\
 &&+\frac{2H^2T}{h}+\Vert\theta_{h}''\Vert TH^2(M_1+C)(\sqrt{\Vert\Delta_n\Vert}+\sqrt{\Vert\Delta_n\Vert})\\
 &\leq& 3\varepsilon+L\int_0^t\E[|X^n_s-X^m_s|]ds.
 \end{eqnarray*}
 By Gronwall's inequality we conclude then that
 \begin{eqnarray*}
 \E[|X^n_t - X^m_t|] \leq 3 e^{Lt}\varepsilon\leq 3 e^{LT}\varepsilon,
 \end{eqnarray*}
for all $ n,m \geq n_{\varepsilon}$ and all $ t \in [0,T] $. Hence,
\begin{eqnarray*}
\E\left[\int_0^T|X^n_t - X^m_t|dt\right]&=&\int_0^T\E[|X^n_t - X^m_t|]dt\\
&\leq& T\sup_{t\in[0,T]}\E[|X^n_t - X^m_t|]\\
&\leq&3Te^{LT}\varepsilon.
\end{eqnarray*}
The claim of step three is proved.\\

\noindent\textbf{Step four:}  \emph{$\{X^n\}_{n \geq 1}$ is a Cauchy
sequence in $L^{1}(\Omega; C([0,T]))$}. \\

\noindent We know that $ \{X^n\}_{n \geq 1} $ is a Cauchy sequence in $ L^{1}([0,T]\times \Omega) $ which is
a complete space. We can therefore conclude that there exists a stochastic process $X \in L^{1}([0,T]\times \Omega) $ such that 
\begin{eqnarray*}
 \lim_{n \to \infty}\E\left[\int_0^T|X^n_t-X_t|dt\right]=0.
 \end{eqnarray*}
From Step two we can also deduce that
\begin{eqnarray*}
 \lim_{n \to \infty}\E\left[\int_0^T|X^n_{\eta_n(t)}-X_t|dt\right]=0.
 \end{eqnarray*}
Hence, there exists a subsequence (we keep the same indexes though for easy notations) such that
\begin{eqnarray*} 
 \lim_{n \to \infty}X_t^n(\omega)=\lim_{n \to \infty}X_{\eta_n(t)}^n(\omega) = X_t(\omega)\quad   dt\times d\P\mbox{-almost surely}.
 \end{eqnarray*}
Since the process $\{X^n_t\}_{t\in[0,T]}$ is $\{\mathcal{F}_t\}_{t\in [0,T]}$-adapted for any $n\in\mathbb{N}$ and almost sure convergence preserves measurability, we deduce that $\{X_t\}_{t\in[0,T]}$ is also $\{\mathcal{F}_t\}_{t\in [0,T]}$-adapted. To prove the continuity of $\{X_t\}_{t\in[0,T]}$ we need to check the convergence in the uniform topology, i.e. we need to estimate  
$\E\left[\sup_{t\in [0,T]}|X^n_t - X^m_t|\right]$.\\
As before we employ the representation (\ref{compact euler}):
\begin{eqnarray*}
\E\left[\sup_{t\in [0,T]}|X^n_t - X^m_t|\right]&\leq&\E\left[\sup_{t\in [0,T]}\int_{0}^{t}|\bar{a}(Y_{\eta_n(s)},X^n_{\eta_{n}(s)}) -\bar{a}(Y_{\eta_m(s)},X^m_{\eta_{m}(s)})|ds\right]\\
&&+\E\left[\sup_{t\in [0,T]}\left|\int_{0}^{t}(\bar{g}(Y_{\eta_n(s)},X^n_{\eta_{n}(s)}) -\bar{g}(Y_{\eta_m(s)},X^m_{\eta_{m}(s)}))dW^2_s\right|\right]\\ 
&\leq& \int_{0}^{T}\E[|\bar{a}(Y_{\eta_n(s)},X^n_{\eta_{n}(s)}) -\bar{a}(Y_{\eta_m(s)},X^m_{\eta_{m}(s)})|]ds\\
&&+\E\left[\sup_{t\in [0,T]}\left|\int_{0}^{t}(\bar{g}(Y_{\eta_n(s)},X^n_{\eta_{n}(s)}) -\bar{g}(Y_{\eta_m(s)},X^m_{\eta_{m}(s)}))dW^2_s\right|^2\right]^{\frac{1}{2}}\\
&=:&J_1+J_2
\end{eqnarray*}
 To treat $J_1$ we proceed as before; using inequality (\ref{I_1}) we obtain
 \begin{eqnarray}\label{J_1}
 J_1&=& \int_{0}^{T}\E[|\bar{a}(Y_{\eta_n(s)},X^n_{\eta_{n}(s)}) -\bar{a}(Y_{\eta_m(s)},X^m_{\eta_{m}(s)})|]ds\nonumber\\
 &\leq&L \int_{0}^{T}\E[|X^n_{\eta_{n}(s)}-X^m_{\eta_{m}(s)}|]ds+\int_0^T\E[|Y_{\eta_{n}(s)}-Y_{\eta_{m}(s)}|]ds\\ 
 &\leq&TL(M_1+C)(\sqrt{\Vert\Delta_n\Vert}+\sqrt{\Vert\Delta_n\Vert})+L\int_0^T\E[|X^n_s-X^m_s|]ds\nonumber.
 \end{eqnarray}
Since we proved in Step three that $\{X^n\}_{n\geq 1}$ is a Cauchy sequence in $L^1([0,T[\times\Omega)$ and by assumption $\Vert\Delta_n\Vert$ tends to zero as $n$ tends to infinity, we can find $n$ and $m$ big enough to make the last row of the previous chain of inequalities smaller than any positive $\varepsilon$.  \\
We now evaluate $J_2$. Invoking the Doob maximal inequality and It\^o isometry we can write
\begin{eqnarray*} 
 J_2&=&\E\left[\sup_{t\in [0,T]}\left|\int_{0}^{t}(\bar{g}(Y_{\eta_n(s)},X^n_{\eta_{n}(s)}) -\bar{g}(Y_{\eta_m(s)},X^m_{\eta_{m}(s)}))dW^2_s\right|^2\right]^{\frac{1}{2}}\\
&\leq&2\E\left[\left|\int_{0}^{T}(\bar{g}(Y_{\eta_n(s)},X^n_{\eta_{n}(s)}) -\bar{g}(Y_{\eta_m(s)},X^m_{\eta_{m}(s)}))dW^2_s\right|^2\right]^{\frac{1}{2}}\\
&=&2\E\left[\int_{0}^{T}|\bar{g}(Y_{\eta_n(s)},X^n_{\eta_{n}(s)}) -\bar{g}(Y_{\eta_m(s)},X^m_{\eta_{m}(s)})|^2ds\right]^{\frac{1}{2}}\\
&\leq&2H\E\left[\int_{0}^{T}\left(\sqrt{|X^n_{\eta_{n}(s)}-X^m_{\eta_{m}(s)}|}+\sqrt{|Y_{\eta_{n}(s)}-Y_{\eta_{m}(s)}|}\right)^2ds\right]^{\frac{1}{2}}\\
&\leq&2\sqrt{2}H\E\left[\int_{0}^{T}|X^n_{\eta_{n}(s)}-X^m_{\eta_{m}(s)}|+|Y_{\eta_{n}(s)}-Y_{\eta_{m}(s)}|ds\right]^{\frac{1}{2}}\\
&=&2\sqrt{2}H\left(\int_{0}^{T}\E[|X^n_{\eta_{n}(s)}-X^m_{\eta_{m}(s)}|]+\E[|Y_{\eta_{n}(s)}-Y_{\eta_{m}(s)}|]ds\right)^{\frac{1}{2}}.
\end{eqnarray*}
If we now observe that the last member above is equivalent to (\ref{J_1}), we can proceed as before and conclude that for any $\varepsilon>0$ there exists $n_{\varepsilon}\in\mathbb{N}$ such that
\begin{eqnarray*}
\E\left[\sup_{t\in [0,T]}|X^n_t-X^m_t|\right]<\varepsilon\quad\mbox{ for all }n,m\geq n_{\varepsilon}.
\end{eqnarray*}
This proves that $\{X_n\}_{n\geq 1}$ is a Cauchy sequence in $L^1(\Omega;C([0,T])$ and thus 
\begin{eqnarray*}
\lim_{n\to\infty}\E\left[\sup_{t\in [0,T]}|X^n_t-X_t|\right]=0
\end{eqnarray*}
where $\{X_t\}_{t\in [0,T]}$ is the stochastic process obtained in Step three. Moreover, we can find a subsequence  (we keep the same indexes though for easy notations) such that
\begin{eqnarray*}
\lim_{n\to\infty}\sup_{t\in [0,T]}|X^n_t(\omega)-X_t(\omega)|=0\quad  d\P\mbox{-almost surely}.
\end{eqnarray*}
Since the processes $\{X^n_t\}_{t\in [0,T]}$ are continuous by construction for each $n\in\mathbb{N}$, we deduce that the process $\{X_t\}_{t\in [0,T]}$ is also continuous being a uniform limit of continuous functions.\\

\noindent\textbf{Step five:}  \emph{The stochastic process $\{X_t\}_{t\in [0,T]}$ solves equation (\ref{SDE})}. \\

\noindent Finally we show that   
\begin{eqnarray*} 
 \P\left(X(t) = x + \int_{0}^{t}\bar{a}(Y_s,X_s)ds+\int_{0}^{t}\bar{g}(Y_s,X_s)dW^2_s\quad\mbox{ for all }t\in [0,T]\right)=1. 
 \end{eqnarray*}
 This in turn will be proven by showing that 
\begin{eqnarray*}
\E\left[\sup_{t\in [0,T]}\left|X_t- x-\int_{0}^{t}\bar{a}(Y_s,X_s)ds-\int_{0}^{t}\bar{g}(Y_s,X_s)dW^2_s\right|\right] =0
\end{eqnarray*}
 In fact, the equality
 \begin{eqnarray*}
 &&X_t- x-\int_{0}^{t}\bar{a}(Y_s,X_s)ds-\int_{0}^{t}\bar{g}(Y_s,X_s)dW^2_s\\
 &=&X_t- X^n_{\eta_n(t)}+\int_{0}^{t}\bar{a}(Y_{\eta_n(s)},X^n_{\eta_n(s)})-\bar{a}(Y_s,X_s)ds\\
 &&+\int_{0}^{t}\bar{g}(Y_{\eta_n(s)},X^n_{\eta_n(s)})-\bar{g}(Y_s,X_s)dW^2_s
 \end{eqnarray*}
 implies
 \begin{eqnarray*}
 &&\sup_{t\in [0,T]}\left|X_t- x-\int_{0}^{t}\bar{a}(Y_s,X_s)ds-\int_{0}^{t}\bar{g}(Y_s,X_s)dW^2_s\right|\\
 &\leq& \sup_{t\in [0,T]}|X_t- X^n_{\eta_n(t)}|+\int_{0}^{T}|\bar{a}(Y_{\eta_n(s)},X^n_{\eta_n(s)})-\bar{a}(Y_s,X_s)|ds\\
 &&+ \sup_{t\in [0,T]}\left|\int_{0}^{t}\bar{g}(Y_{\eta_n(s)},X^n_{\eta_n(s)})-\bar{g}(Y_s,X_s)dW^2_s\right|.
 \end{eqnarray*}
 If we take the expectation and use the technique utilized in Step four to bound the terms in the right hand side of the previous inequality we get
 \begin{eqnarray*}
&&\E\left[\sup_{t\in [0,T]}\left|X_t- x-\int_{0}^{t}\bar{a}(Y_s,X_s)ds-\int_{0}^{t}\bar{g}(Y_s,X_s)dW^2_s\right|\right]\\
&=&\lim_{n\to\infty}\E\left[\sup_{t\in [0,T]}\left|X_t- x-\int_{0}^{t}\bar{a}(Y_s,X_s)ds-\int_{0}^{t}\bar{g}(Y_s,X_s)dW^2_s\right|\right]\\
&\leq&\lim_{n\to\infty}\left(\E\left[\sup_{t\in [0,T]}|X_t- X^n_{\eta_n(t)}|\right]+\E\left[\int_{0}^{T}|\bar{a}(Y_{\eta_n(s)},X^n_{\eta_n(s)})-\bar{a}(Y_s,X_s)|ds\right]\right)\\
&&+\lim_{n\to\infty}\E\left[\sup_{t\in [0,T]}\left|\int_{0}^{t}\bar{g}(Y_{\eta_n(s)},X^n_{\eta_n(s)})-\bar{g}(Y_s,X_s)dW^2_s\right|\right]\\
&=&0.
\end{eqnarray*}
 
\noindent\textbf{Uniqueness}:  We use a standard approach. Let $\{X_t\}_{t\in [0,T]}$ and $\{Z_t\}_{t\in [0,T]}$  be two strong solutions of equation (\ref{SDE}). Setting,
 \begin{eqnarray}\label{eq:62}
 \delta_{t}:= X_t -Z_t = \int_{0}^{t}[\bar{a}(Y_s,X_s) - \bar{a}(Y_s,Z_s) ]ds + \int_{0}^{t}[\bar{g}(Y_s,X_s) - \bar{g}(Y_s,Z_s) ]dW^2_s
 \end{eqnarray}
we get by the It\^o formula
\begin{eqnarray*}\label{eq:63}
 \theta_h(\delta_{t})& =& \int_{0}^{t}\theta_h'(\delta_{s})[\bar{a}(Y_s,X_s) - \bar{a}(Y_s,Z_s) ]ds\\
 &&+\int_{0}^{t}\theta_h'(\delta_{s})[\bar{g}(Y_s,X_s) - \bar{g}(Y_s,Z_s) ]dW^2_s\\
 &&+\frac{1}{2}\int_{0}^{t}\theta_h''(\delta_{s})[\bar{g}(Y_s,X_s) - \bar{g}(Y_s,Z_s) ]^2ds
\end{eqnarray*}
where $ \{\theta_h\}_{h\geq 0}$ is the collection of functions defined in Step three. Using the assumptions on $\bar{a}$ and $\bar{g}$ and the bounds $|\theta'_h(u)|\leq 1$ and $|\theta_h''(u)|\leq \frac{2}{hu}$ we get
\begin{eqnarray*}
\E[\theta_{h}(\delta_t)]&\leq& \E\left[\int_{0}^{t}\theta_h'(\delta_{s})[\bar{a}(Y_s,X_s) - \bar{a}(Y_s,Z_s) ]ds\right] + \frac{tH^2}{h} \\
&\leq&  L\int_{0}^{t}\E[|\delta_{s}|]ds + \frac{tH^2}{h}
\end{eqnarray*}
If we let $h \to\infty $, the function $\theta_h$ approaches the absolute value function; hence,  Gronwall's inequality and sample path continuity imply that $\{X_t\}_{t\in [0,T]}$ and $\{Z_t\}_{t\in [0,T]}$ are indistinguishable.
\end{proof}

\vspace*{8pt}


\begin{thebibliography}{99}

\bibitem{Allen book}
E. Allen, \emph{Modelling with Itô Stochastic Differential Equations}, Springer-Verlag, London, 2007.

\bibitem{Allen paper}
L.J.S. Allen, An introduction to stochastic epidemic models in mathematical epidemiology, in: \emph{F. Brauer, P. van den Driessche, J. Wu (Eds.), Lecture Notes in Biomathematics, Mathematical Biosciences Subseries}, vol. 1945, Springer-Verlag, Berlin (2008) 81–130.

\bibitem{AB}
L.J.S. Allen, A.M. Burgin, Comparison of deterministic and stochastic SIS and SIR models in discrete time, \emph{Math. Biosci.} 163 (2000) 1–33.

\bibitem{GGHMP}
A. Gray, D. Greenhalgh, L. Hu, X. Mao, J. Pan, A stochastic differential equation SIS epidemic model, \emph{SIAM J. Appl. Math.} 71 (3) (2011) 876–902.

\bibitem{Mao paper} 
D. Greenhalgh,  Y.Liang, X.Mao, SDE SIS epidemic model with demographic stochasticity and varying population size, \emph{Applied Mathematics and Computation}, 276 (2016) 218-238

\bibitem{HD}
H.W. Hethcote, P. van den Driessche, An SIS epidemic model with variable population size and a delay, \emph{J. Math. Biol.} 34 (1995) 177–194.

\bibitem{Ikeda Watanabe}
N. Ikeda and S.Watanabe, \emph{Stochastic Differential Equations and Diffusion Processes}, North Holland, Amsterdam, New York, Oxford, Kodansha, 1981.

\bibitem{KL}
R.J. Kryscio, C. Lefévre, On the extinction of the SIS stochastic logistic epidemic, \emph{J. Appl. Probab.} 26 (4) (1989) 685–694.

\bibitem{Mao book}
X. Mao, \emph{Stochastic Differential Equations and Applications}, Second edition, Horwood, Chichester, UK, 2008.

\bibitem{MA}
R.K. McCormack, L.J.S. Allen, Stochastic SIS and SIR multihost epidemic models, in: \emph{R.P. Agarwal, K. Perera (Eds.), Proceedings of the Conference on Differential and Difference Equations and Applications}, Hindawi, New York (2006) 775–786.

\bibitem{Revuz Yor}
D. Revuz, M. Yor, \emph{Continuous Martingales and Brownian Motion}, Third Edition, Springer-Verlag, Berlin, 1999.

\bibitem{N}
I. Nasell, The quasi-stationary distribution of the closed endemic SIS model, \emph{Adv. Appl. Probab.} 28 (1996) 895–932.

\bibitem{Zubchenko}
V.P. Zubchenko, Properties of solutions  of stochastic differential equations with random coefficients, non-Lipschitzian coefficients and Poisson measure, \emph{Theor. Probability and Math. Statist.} 82 (2011) 11-26

\end{thebibliography}
\end{document}